

\documentclass{article}



\usepackage{amssymb}
\usepackage{hyperref}
\usepackage[leqno]{amsmath}
\usepackage{graphicx}

\usepackage{cases}
\usepackage{subfigure}
\usepackage{amsthm}
\usepackage[german,english]{babel}
\usepackage{color}
\usepackage{epstopdf}
\numberwithin{equation}{section}





\begin{document}
\title{A note of pointwise estimates on Shishkin meshes}

\author{%
Jin Zhang\thanks{Email: JinZhangalex@hotmail.com}
\footnote{Address:School of Science, Xi'an Jiaotong University,
Xi'an, 710049, China}}
\maketitle
\begin{abstract}
We propose the estimates of the discrete Green function for the streamline
diffusion finite element method (SDFEM) on Shishkin meshes.
\end{abstract}
\section{Problem}
We consider the singularly perturbed boundary value problem
\begin{align}\label{model problem}
 Lu:=-\varepsilon\Delta u+\boldsymbol{b}\cdot \nabla u+u&=f & \mathrm{in}& & &\Omega=(0,1)^{2},\tag{1.1a}\\
 u&=0 & \mathrm{on}& & &\partial\Omega,\tag{1.1b}
\end{align}
where $\varepsilon\ll 1$ is a small positive parameter and $\boldsymbol{b}=(b_{1},b_{2})^{T}>(0,0)^{T}$ is
constant. It is also assumed that $f$ is sufficiently smooth.

\section{The SDFEM on Shishkin meshes}

\subsection{Shishkin meshes }
Let $N>4$ be a positive even integer. We use a piecewise uniform mesh --- a so-called \textit{Shishkin} mesh ---with $N$ mesh intervals in both $x-$ and $y-$direction which condenses in the layer regions. For this purpose we define the two mesh transition parameters
\begin{equation*}
\lambda_{x}:=\min\left\{ \frac{1}{2},2\frac{\varepsilon}{\beta_{1}}\ln N \right\} \quad \mbox{and} \quad
\lambda_{y}:=\min\left\{
\frac{1}{2},2\frac{\varepsilon}{\beta_{2}}\ln N \right\}.
\end{equation*}
\newtheorem{assumption}{Assumption}
\begin{assumption}
We assume in our analysis that $\varepsilon\le N^{-1}$, as is generally the case in practice. Furthermore we assume that
$\lambda_{x}=2\varepsilon\beta^{-1}_{1}\ln N$ and $\lambda_{y}=2\varepsilon\beta^{-1}_{2}\ln N$ as otherwise $N^{-1}$ is exponentially small compared with $\varepsilon$.
\end{assumption}

\begin{figure}[h]
\centering
\includegraphics[width=0.8\textwidth]{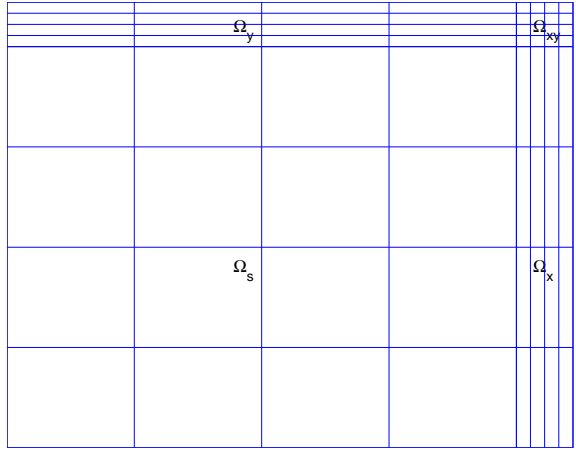}
\caption{Dissection of $\Omega$ and triangulation $\Omega^{N}$.}
\label{Shishkin mesh}
\end{figure}
The domain $\Omega$ is dissected into four parts as $\Omega=\Omega_{s}\cup\Omega_{x}\cup\Omega_{y}\cup\Omega_{xy}$(see FIG. \ref{Shishkin mesh}), where
\begin{align*}
&\Omega_{s}:=\left[0,1-\lambda_{x}\right]\times\left[0,1-\lambda_{y}\right],&&
\Omega_{x}:=\left[ 1-\lambda_{x},1 \right]\times\left[0,1-\lambda_{y}\right],\\
&\Omega_{y}:=\left[0,1-\lambda_{x}\right]\times\left[1-\lambda_{y},1 \right],&&
\Omega_{xy}:=\left[ 1-\lambda_{x},1 \right]\times\left[1-\lambda_{y},1 \right].
\end{align*}

\par
We introduce the set of mesh points $\left\{ (x_{i},y_{j})\in\Omega:i,\,j=0,\,\cdots,\,N  \right\}$ defined by
\begin{numcases}{x_{i}=}
2i(1-\lambda_{x})/N ,&\text{for $i=0,\,\cdots,\,N/2$}, \nonumber\\
1-2(N-i)\lambda_{x}/N, &\text{for $i=N/2+1,\,\cdots,\,N$}\nonumber
\end{numcases}
and
\begin{numcases}{y_{j}=}
2j(1-\lambda_{y})/N ,&\text{for $j=0,\,\cdots,\,N/2$}, \nonumber\\
1-2(N-j)\lambda_{y}/N, &\text{for $j=N/2+1,\,\cdots,\,N$}.\nonumber
\end{numcases}
By drawing lines through these mesh points parallel to the $x$-axis and $y$-axis the domain $\Omega$ is partitioned into rectangles. This triangulation is denoted by $\Omega^{N}$(see FIG. \ref{Shishkin mesh}). If $D$ is a mesh subdomain of $\Omega$, we write $D^{N}$ for the triangulation of $D$. The mesh sizes $h_{x,\tau}=x_{i}-x_{i-1}$ and $h_{y,\tau}=y_{j}-y_{j-1}$ satisfy
\begin{numcases}{h_{x,\tau}=}
H_{x}:=\frac{1-\lambda_{x}}{N/2}, &\text{for $i=1,\,\cdots,\,N/2$}, \nonumber\\
h_{x}:=\frac{\lambda_{x}}{N/2}, &\text{for $i=N/2+1,\,\cdots,\,N$} \nonumber
\end{numcases}
and
\begin{numcases}{h_{y,\tau}=}
H_{y}:=\frac{1-\lambda_{y}}{N/2}, &\text{for $j=1,\,\cdots,\,N/2$},\nonumber \\
h_{y}:=\frac{\lambda_{y}}{N/2}, &\text{for $j=N/2+1,\,\cdots,\,N$}.\nonumber
\end{numcases}
The mesh sizes $h_{x,\tau}$ and $h_{y,\tau}$ satisfy
\begin{equation*}
N^{-1}\le H_{x},H_{y} \le 2N^{-1} \quad \mbox{and} \quad
C_{1}\varepsilon N^{-1}\ln N \le h_{x},h_{y}\le C_{2}\varepsilon N^{-1}\ln N,
\end{equation*}
where $C_{1}$ and $C_{2}$ are positive constants and independent of $\varepsilon$
and of the mesh parameter $N$. The above properties are essential when inverse inequalities
are applied in our later analysis.
\par
For the mesh elements we shall use two notations: $\tau_{ij}=[x_{i-1},x_{i}]\times [y_{j-1},y_{j}]$ for a specific element, and $\tau$ for a generic mesh rectangle.
\subsection{The streamline diffusion finite element method}

Let $V:=H^{1}_{0}(\Omega)$. On the above Shishkin mesh we define a finite element space
\begin{equation*}
V^{N}:=\{v^{N}\in C(\bar{\Omega}):v^{N}|_{\partial\Omega}=0
 \text{ and $v^{N}|_{\tau}$ is bilinear, }
  \forall\tau\in\Omega^{N} \}.
\end{equation*}
\par
 In this case, the SDFEM reads as
\begin{equation}\label{SDFEM}
\left\{
\begin{array}{lr}
\text{Find $U\in V^{N}$ such that for all $v^{N}\in V^{N}$}\\
\varepsilon(\nabla U,\nabla v^{N})+(\boldsymbol{b}\cdot\nabla U+U,v^{N}+\delta\boldsymbol{b}\cdot\nabla v^{N})=(f,v^{N}+\delta\boldsymbol{b}\cdot\nabla v^{N}).
\end{array}
\right.
\end{equation}
where $\delta=\delta(\boldsymbol{x})$ is a
user-chosen parameter (see \cite{Johnson:1987-Numerical}).
\par
We set
\begin{equation*}
    b:=\sqrt{b^{2}_{1}+b^{2}_{2}},\quad
    \boldsymbol{\beta}:=\genfrac(){0cm}{0}{b_{1}}{b_{2}}/b,\quad
    \boldsymbol{\eta}:=\genfrac(){0cm}{0}{-b_{2}}{b_{1}}/b
     \quad\mbox{and}\quad
    v_{\zeta}:=\boldsymbol{\zeta}^{T}\nabla v
\end{equation*}
for any vector $\boldsymbol{\zeta}$ of unit length. By an easy calculation one shows that
\begin{equation*}
(\nabla w,\nabla v)=(w_{\beta},v_{\beta})+(w_{\eta},v_{\eta}).
\end{equation*}
We rewrite \eqref{SDFEM} as
\begin{equation*}
\varepsilon(U_{\beta},v^{N}_{\beta})+\varepsilon(U_{\eta},v^{N}_{\eta})
+(bU_{\beta}+U,v^{N}+\delta b v^{N}_{\beta})=
(f,v^{N}+\delta b v^{N}_{\beta})
\end{equation*}
and, following usual practice, we set
\begin{numcases}{\delta(\boldsymbol{x}):=}
N^{-1},&\text{if $\boldsymbol{x}\in\Omega_{s}$},\nonumber\\
0,&\text{otherwise}.\nonumber
\end{numcases}
For technical reasons in the later analysis, we increase the crosswind diffusion(see \cite{Joh1Sch2Wah3:1987-Crosswind}) by replacing $\varepsilon(U_{\eta},v^{N}_{\eta})$ by
$\hat{\varepsilon}(U_{\eta},v^{N}_{\eta})$ where
\begin{equation*}
\tilde{\varepsilon}:=\max(\varepsilon,N^{-3/2})
\end{equation*}
and
\begin{numcases}{\hat{\varepsilon}(\boldsymbol{x}):=}
\tilde{\varepsilon},&\text{ $\boldsymbol{x}\in\Omega_{s}$},\nonumber\\
\varepsilon,&\text{$\boldsymbol{x}\in\Omega \setminus\Omega_{s}$}.\nonumber
\end{numcases}

\par
We now state our streamline diffusion method with artificial crosswind:
\begin{equation}\label{SDFEM with ACD}
\left\{
\begin{array}{lr}
\text{Find $U\in V^{N}$ such that for all $v^{N}\in V^{N}$}\\
B(U,v^{N})=(f,v^{N}+\delta bv^{N}_{\beta}),
\end{array}
\right.
\end{equation}
with
\begin{equation}\label{bilinear SDFEM with ACD}
B(U,v^{N}):=(\varepsilon+b^{2}\delta)(U_{\beta},v^{N}_{\beta})+
\hat{\varepsilon}(U_{\eta},v^{N}_{\eta})-b(1-\delta)(U,v^{N}_{\beta})+(U,v^{N}).
\end{equation}

\section{The discrete Green function}
Let $\boldsymbol{x}^{\ast}$ be a mesh node in $\Omega$. The discrete Green's function $G\in V^{N}$ associated with $\boldsymbol{x}^{\ast}$ is defined by

\begin{equation*}
B(v^{N},G)=v^{N}(\boldsymbol{x}^{\ast}),\,\forall v^{N}\in V^{N}.
\end{equation*}

\par\noindent
The weighted function $\omega$:
\begin{equation*}
\omega(\boldsymbol{x}):=
g\left(\frac{(\boldsymbol{x}-\boldsymbol{x}^{\ast})\cdot\boldsymbol{\beta}}{\sigma_{\beta}}\right)
g\left(\frac{(\boldsymbol{x}-\boldsymbol{x}^{\ast})\cdot\boldsymbol{\eta}}{\sigma_{\eta}}\right)
g\left(-\frac{(\boldsymbol{x}-\boldsymbol{x}^{\ast})\cdot\boldsymbol{\eta}}{\sigma_{\eta}}\right)
\end{equation*}
where
\begin{equation*}
g(r)=\frac{2}{1+e^{r}}\quad\text{ for $r\in(-\infty,\infty)$}.
\end{equation*}
and $\sigma_{\beta}=kN^{-1}\ln N$ and $\sigma_{\eta}=k\tilde{\varepsilon}^{1/2} \ln N$.
\begin{equation*}
\vert\vert\vert G \vert\vert\vert^{2}_{\omega}:=(\varepsilon+b^{2}\delta)\Vert \omega^{-1/2}G_{\beta} \Vert^{2}
+\hat{\varepsilon}\Vert \omega^{-1/2}G_{\eta} \Vert^{2}+\frac{b}{2}\Vert (\omega^{-1})^{1/2}_{\beta}G\Vert^{2}+\Vert \omega^{-1/2}G \Vert^{2}
\end{equation*}
and
\begin{align}\label{analysis equation}
\vert\vert\vert G \vert\vert\vert^{2}_{\omega}
=&B(\omega^{-1}G,G)-(\varepsilon+b^{2}\delta)((\omega^{-1})_{\beta}G,G_{\beta})\\
&-\hat{\varepsilon}((\omega^{-1})_{\eta}G,G_{\eta})-b\delta(\omega^{-1}G,G_{\beta})\nonumber.
\end{align}
Thus, we obtain
\begin{align*}
B(\omega^{-1}G,G)&=B(E,G)+B((\omega^{-1}G)^{I},G)\\
&=B(E,G)+(\omega^{-1}G)(\boldsymbol{x}^{\ast})\nonumber
\end{align*}
where $E:=\omega^{-1}G-(\omega^{-1}G)^{I}$.
\newtheorem{lemma}{Lemma}
\begin{lemma}
If $\sigma_{\beta}=kN^{-1}\ln N$ and $\sigma_{\eta}=k\ln N\tilde{\varepsilon}^{1/2}$, then for $k>1$ sufficiently
large and independent of $N$ and $\varepsilon$, we have
\begin{equation*}
B(\omega^{-1}G,G)\ge \frac{1}{4}\Vert\vert G \vert\Vert^{2}_{\omega}.
\end{equation*}
\end{lemma}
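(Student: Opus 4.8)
The plan is to take the identity \eqref{analysis equation} as the starting point and read it as
\begin{equation*}
B(\omega^{-1}G,G)=\vert\vert\vert G\vert\vert\vert^{2}_{\omega}+T_{1}+T_{2}+T_{3},
\end{equation*}
with $T_{1}=(\varepsilon+b^{2}\delta)((\omega^{-1})_{\beta}G,G_{\beta})$, $T_{2}=\hat{\varepsilon}((\omega^{-1})_{\eta}G,G_{\eta})$ and $T_{3}=b\delta(\omega^{-1}G,G_{\beta})$. Then it suffices to show $|T_{1}|+|T_{2}|+|T_{3}|\le\frac{3}{4}\vert\vert\vert G\vert\vert\vert^{2}_{\omega}$ once $k$ is large, since $B(\omega^{-1}G,G)\ge\vert\vert\vert G\vert\vert\vert^{2}_{\omega}-(|T_{1}|+|T_{2}|+|T_{3}|)$. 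The whole argument rests on two pointwise facts obtained by differentiating $\omega^{-1}$ explicitly. Because $\boldsymbol{\beta}\cdot\boldsymbol{\eta}=0$, only the first factor of $\omega$ depends on the streamline direction and only the last two on the crosswind direction; writing $s=(\boldsymbol{x}-\boldsymbol{x}^{\ast})\cdot\boldsymbol{\beta}/\sigma_{\beta}$ and $t=(\boldsymbol{x}-\boldsymbol{x}^{\ast})\cdot\boldsymbol{\eta}/\sigma_{\eta}$, a direct computation with $g(r)=2/(1+e^{r})$ gives $(\omega^{-1})_{\beta}=\sigma_{\beta}^{-1}\frac{e^{s}}{1+e^{s}}\omega^{-1}$ and $(\omega^{-1})_{\eta}=\sigma_{\eta}^{-1}\tanh(t/2)\,\omega^{-1}$, whence
\begin{equation*}
0<(\omega^{-1})_{\beta}\le\sigma_{\beta}^{-1}\omega^{-1},\qquad |(\omega^{-1})_{\eta}|\le\sigma_{\eta}^{-1}\omega^{-1}.
\end{equation*}
These are the only structural properties of $\omega$ I will use.

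For $T_{3}$ I would first integrate by parts along $\boldsymbol{\beta}$; since $G\in H^{1}_{0}(\Omega)$ the boundary term vanishes and $(\omega^{-1}G,G_{\beta})=-\frac{1}{2}\Vert(\omega^{-1})_{\beta}^{1/2}G\Vert^{2}$, so that $T_{3}=-\frac{b\delta}{2}\Vert(\omega^{-1})_{\beta}^{1/2}G\Vert^{2}$ is a negative multiple of the norm term $\frac{b}{2}\Vert(\omega^{-1})_{\beta}^{1/2}G\Vert^{2}$ and $|T_{3}|\le\delta\,\vert\vert\vert G\vert\vert\vert^{2}_{\omega}\le N^{-1}\vert\vert\vert G\vert\vert\vert^{2}_{\omega}$. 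For $T_{1}$ I would split $(\omega^{-1})_{\beta}=(\omega^{-1})_{\beta}^{1/2}\cdot(\omega^{-1})_{\beta}^{1/2}$, pair $(\omega^{-1})_{\beta}^{1/2}G$ against $(\omega^{-1})_{\beta}^{1/2}G_{\beta}=(\tfrac{e^{s}}{(1+e^{s})\sigma_{\beta}})^{1/2}\omega^{-1/2}G_{\beta}$, and apply Cauchy--Schwarz followed by a weighted Young inequality. Using $e^{s}/(1+e^{s})\le 1$, this controls $|T_{1}|$ by a multiple of $\Vert(\omega^{-1})_{\beta}^{1/2}G\Vert^{2}$ plus a multiple of $(\varepsilon+b^{2}\delta)\Vert\omega^{-1/2}G_{\beta}\Vert^{2}$; the point is that $(\varepsilon+b^{2}\delta)/\sigma_{\beta}\le(1+b^{2})N^{-1}/(kN^{-1}\ln N)=(1+b^{2})/(k\ln N)$, so taking the Young weight of order $\sigma_{\beta}^{-1}$ leaves only $\tfrac{3}{4}(\varepsilon+b^{2}\delta)\Vert\omega^{-1/2}G_{\beta}\Vert^{2}$ while the coefficient in front of $\Vert(\omega^{-1})_{\beta}^{1/2}G\Vert^{2}$ is $O(1/(k\ln N))$.

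The term $T_{2}$ is the delicate one and is where I expect the main obstacle. Here $\sigma_{\eta}=k\tilde{\varepsilon}^{1/2}\ln N$ can be tiny, so $\sigma_{\eta}^{-1}$ is large and a naive split of $T_{2}$ fails. The remedy is to pair $\omega^{-1/2}G$ with $\omega^{-1/2}G_{\eta}$ and to arrange Young's inequality so that the expensive factor $\Vert\omega^{-1/2}G\Vert^{2}$ (whose norm coefficient is only $1$) picks up the weight $\hat{\varepsilon}/\sigma_{\eta}^{2}$ rather than $\hat{\varepsilon}/\sigma_{\eta}$. Since $\hat{\varepsilon}\le\tilde{\varepsilon}$ and $\sigma_{\eta}^{2}=k^{2}\tilde{\varepsilon}\ln^{2}N$, one gets $\hat{\varepsilon}/\sigma_{\eta}^{2}\le 1/(k^{2}\ln^{2}N)$; this is exactly the balance for which $\sigma_{\eta}$ was scaled to $\tilde{\varepsilon}^{1/2}$ and the artificial crosswind diffusion $\hat{\varepsilon}$ was introduced. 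With a Young weight of order one this yields $|T_{2}|\le\frac{1}{2}\hat{\varepsilon}\Vert\omega^{-1/2}G_{\eta}\Vert^{2}+\frac{1}{2k^{2}\ln^{2}N}\Vert\omega^{-1/2}G\Vert^{2}$.

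Finally I would add the three estimates and compare with $\vert\vert\vert G\vert\vert\vert^{2}_{\omega}$ coefficient by coefficient: the contributions to $\Vert(\omega^{-1})_{\beta}^{1/2}G\Vert^{2}$ and to $\Vert\omega^{-1/2}G\Vert^{2}$ are $O(1/(k\ln N))+O(N^{-1})$ and $O(1/(k^{2}\ln^{2}N))$ respectively, while those to $(\varepsilon+b^{2}\delta)\Vert\omega^{-1/2}G_{\beta}\Vert^{2}$ and $\hat{\varepsilon}\Vert\omega^{-1/2}G_{\eta}\Vert^{2}$ are the fixed fractions $\tfrac{3}{4}$ and $\tfrac{1}{2}$. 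Choosing $k>1$ large enough, independent of $N$ and $\varepsilon$, makes each of these at most $\tfrac{3}{4}$ of the corresponding coefficient in $\vert\vert\vert G\vert\vert\vert^{2}_{\omega}$, so that $|T_{1}|+|T_{2}|+|T_{3}|\le\frac{3}{4}\vert\vert\vert G\vert\vert\vert^{2}_{\omega}$ and the claimed bound $B(\omega^{-1}G,G)\ge\frac{1}{4}\vert\vert\vert G\vert\vert\vert^{2}_{\omega}$ follows. The one technical point to keep straight throughout is that $\varepsilon+b^{2}\delta$ and $\hat{\varepsilon}$ are only piecewise constant, so every Young splitting must be carried out pointwise under the integral sign before the global bounds on $(\varepsilon+b^{2}\delta)/\sigma_{\beta}$ and $\hat{\varepsilon}/\sigma_{\eta}^{2}$ are invoked.
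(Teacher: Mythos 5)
Your proposal follows the paper's proof in all essentials: the same decomposition of $B(\omega^{-1}G,G)$ via \eqref{analysis equation}, Cauchy--Schwarz together with the pointwise bounds $0<(\omega^{-1})_{\beta}\le\sigma_{\beta}^{-1}\omega^{-1}$ and $|(\omega^{-1})_{\eta}|\le\sigma_{\eta}^{-1}\omega^{-1}$ for the two derivative terms, integration by parts for the $b\delta$-term, and absorption for $k$ large using $\varepsilon\le N^{-1}$ and $\hat{\varepsilon}\le\tilde{\varepsilon}$; your identification of the decisive ratios $(\varepsilon+b^{2}\delta)\sigma_{\beta}^{-1}=O(1/(k\ln N))$ and $\hat{\varepsilon}\sigma_{\eta}^{-2}\le 1/(k^{2}\ln^{2}N)$ is exactly the mechanism the paper's (much terser) proof relies on.

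One step, however, is misstated. In $T_{3}=b\delta(\omega^{-1}G,G_{\beta})$ the factor $\delta$ is not a constant: $\delta=N^{-1}$ on $\Omega_{s}$ and $\delta=0$ elsewhere, so $T_{3}=bN^{-1}\int_{\Omega_{s}}\omega^{-1}GG_{\beta}$ and the integration by parts must be carried out over $\Omega_{s}$, not over $\Omega$. Consequently the boundary term does not vanish: $G\in H^{1}_{0}(\Omega)$ removes only the contributions from the edges $x=0$ and $y=0$ of $\partial\Omega_{s}$, while on the interior edges $x=1-\lambda_{x}$ and $y=1-\lambda_{y}$ one retains $\tfrac{1}{2}bN^{-1}\int\omega^{-1}G^{2}(\boldsymbol{\beta}\cdot\boldsymbol{n})\,\mathrm{d}s$. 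Hence your asserted identity $T_{3}=-\tfrac{b\delta}{2}\Vert(\omega^{-1})_{\beta}^{1/2}G\Vert^{2}$ is false, and with it the two-sided bound $|T_{3}|\le N^{-1}\vert\vert\vert G\vert\vert\vert^{2}_{\omega}$ (a line integral of $\omega^{-1}G^{2}$ is not controlled by $\vert\vert\vert G\vert\vert\vert_{\omega}$). The flaw is repairable, and precisely in the direction you need: since $b_{1},b_{2}>0$, those interior edges are outflow edges of $\Omega_{s}$, i.e. $\boldsymbol{\beta}\cdot\boldsymbol{n}>0$ there, so the retained boundary term is nonnegative and one gets the one-sided estimate $T_{3}\ge -\tfrac{1}{2}bN^{-1}\int_{\Omega_{s}}(\omega^{-1})_{\beta}G^{2}\ge -N^{-1}\vert\vert\vert G\vert\vert\vert^{2}_{\omega}$. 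Since the lemma only requires a lower bound on $B(\omega^{-1}G,G)=\vert\vert\vert G\vert\vert\vert^{2}_{\omega}+T_{1}+T_{2}+T_{3}$, replacing $|T_{3}|$ in your final accounting by this one-sided bound closes the proof exactly as you describe; note that the same caution about the discontinuity of $\delta$ that you correctly raise for the Young splittings is the one you overlooked here.
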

\begin{proof}
From \eqref{analysis equation},we estimate the following terms.
\begin{align*}
(\varepsilon+\delta)\left|((\omega^{-1})_{\beta}G,G_{\beta})\right|
&\le
C(\varepsilon+\delta)^{1/2}\sigma^{-1/2}_{\beta}\cdot
\Vert (\omega^{-1})^{1/2}_{\beta}G \Vert
\cdot(\varepsilon+\delta)^{1/2}\Vert \omega^{-1/2}G_{\beta} \Vert\\
&\le
C(\varepsilon+\delta)^{1/2}\sigma^{-1/2}_{\beta}
\Vert\vert G \vert\Vert^{2}_{\omega}
\end{align*}
and
\begin{align*}
\hat{\varepsilon}\left|((\omega^{-1})_{\eta}G,G_{\eta})\right|
&\le
C\hat{\varepsilon}^{1/2}\sigma^{-1}_{\eta}\cdot
\Vert \omega^{-1/2}G \Vert
\cdot\hat{\varepsilon}^{1/2}\Vert \omega^{-1/2}G_{\eta} \Vert\\
&\le
C\hat{\varepsilon}^{1/2}\sigma^{-1}_{\eta}
\Vert\vert G \vert\Vert^{2}_{\omega}
\end{align*}
For $b\delta(\omega^{-1}G,G_{\beta})$, we make use of integration by parts.
\par
From the definition of $\sigma_{\beta}$ and $\sigma_{\eta}$ and $\varepsilon\le N^{-1}$, we take $k$ sufficiently large and we are done.
\end{proof}

\begin{lemma}
If  $\sigma_{\beta}=kN^{-1}\ln N$, with $k>0$ sufficiently
large and independent of $N$ and $\varepsilon$. Then for each mesh point $\boldsymbol{x}^{\ast}\in\Omega\setminus\Omega_{xy}$, we have
\begin{equation*}
\left|(\omega^{-1}G)(\boldsymbol{x}^{\ast})\right|
\le \frac{1}{16}\Vert\vert G \vert\Vert^{2}_{\omega}+CN\ln N.
\end{equation*}
where $C$ is independent of $N$, $\varepsilon$ and $\boldsymbol{x}^{\ast}$.
\end{lemma}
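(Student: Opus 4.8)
The plan is to reduce the claim to one weighted inverse-type estimate and then, in the regions carrying a fine mesh, to peel off the fine direction by a fundamental-theorem-of-calculus splitting. Since $g(0)=1$, the weight satisfies $\omega(\boldsymbol{x}^{\ast})=1$, so $(\omega^{-1}G)(\boldsymbol{x}^{\ast})=G(\boldsymbol{x}^{\ast})$, and by Young's inequality with weight $\tfrac1{16}$ the asserted bound follows once I establish
\[
|G(\boldsymbol{x}^{\ast})|^{2}\le CN\ln N\,\Vert\vert G \vert\Vert^{2}_{\omega}.
\]
The engine is the third term of the norm. A direct computation gives $(\omega^{-1})_{\beta}=\sigma_{\beta}^{-1}\tfrac{e^{a}}{1+e^{a}}\omega^{-1}$ with $a=(\boldsymbol{x}-\boldsymbol{x}^{\ast})\cdot\boldsymbol{\beta}/\sigma_{\beta}$, so on any element $\tau$ lying within distance $O(\sigma_{\beta})$ of $\boldsymbol{x}^{\ast}$ one has $\omega\approx 1$ and $(\omega^{-1})_{\beta}\ge c\sigma_{\beta}^{-1}$; hence the term $\tfrac{b}{2}\Vert(\omega^{-1})^{1/2}_{\beta}G\Vert^{2}$ yields the local bound $\Vert G\Vert^{2}_{L^{2}(\tau)}\le C\sigma_{\beta}\Vert\vert G \vert\Vert^{2}_{\omega}$. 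For $\boldsymbol{x}^{\ast}\in\Omega_{s}$ this finishes matters at once: the bilinear inverse inequality $|G(\boldsymbol{x}^{\ast})|^{2}\le C(H_{x}H_{y})^{-1}\Vert G\Vert^{2}_{L^{2}(\tau)}$ on a coarse element $\tau$ at $\boldsymbol{x}^{\ast}$, together with $(H_{x}H_{y})^{-1}\le CN^{2}$ and $\sigma_{\beta}=kN^{-1}\ln N$, produces exactly $CN\ln N\,\Vert\vert G \vert\Vert^{2}_{\omega}$.

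The delicate case is $\boldsymbol{x}^{\ast}\in\Omega_{x}\cup\Omega_{y}$, where one direction carries the fine mesh of width $\sim\varepsilon N^{-1}\ln N$, so a direct inverse inequality costs $(h_{x}H_{y})^{-1}\sim\varepsilon^{-1}N^{2}(\ln N)^{-1}$ and cannot be repaired by the $\sigma_{\beta}$-gain alone. I treat $\boldsymbol{x}^{\ast}=(x^{\ast},y^{\ast})\in\Omega_{x}$, the case $\Omega_{y}$ being symmetric. Writing $x_{0}:=1-\lambda_{x}$, which is a mesh node with $(x_{0},y^{\ast})\in\overline{\Omega}_{s}$, I split, along the mesh line $y=y^{\ast}$,
\[
G(x^{\ast},y^{\ast})=G(x_{0},y^{\ast})+\int_{x_{0}}^{x^{\ast}}G_{x}(x,y^{\ast})\,dx .
\]
The first term is a coarse--coarse nodal value and is bounded exactly as in the $\Omega_{s}$ case above, using a coarse element $[x_{0}-H_{x},x_{0}]\times J_{y}\subset\Omega_{s}$; note that this element and $\boldsymbol{x}^{\ast}$ are within $O(\sigma_{\beta})$ of each other because $\lambda_{x}\le C\sigma_{\beta}$ under the standing assumption $\varepsilon\le N^{-1}$, so the $\sigma_{\beta}$-gain indeed applies.

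For the integral term Cauchy--Schwarz gives $\big(\int_{x_{0}}^{x^{\ast}}|G_{x}(x,y^{\ast})|\,dx\big)^{2}\le\lambda_{x}\int_{x_{0}}^{x^{\ast}}|G_{x}(x,y^{\ast})|^{2}\,dx$; a one-dimensional inverse inequality in the coarse $y$-direction on the element $J_{y}\ni y^{\ast}$ converts the line integral into an area integral over $S=[x_{0},x^{\ast}]\times J_{y}\subset\Omega_{x}$ at the cost of $H_{y}^{-1}$, and $G_{x}=(b_{1}G_{\beta}-b_{2}G_{\eta})/b$ is controlled by the streamline and crosswind terms, whose coefficients on $\Omega_{x}$ are both $\varepsilon$, so $\Vert G_{x}\Vert^{2}_{L^{2}(S)}\le C\varepsilon^{-1}\Vert\vert G \vert\Vert^{2}_{\omega}$. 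The decisive cancellation is $\lambda_{x}\,\varepsilon^{-1}=2\beta_{1}^{-1}\ln N$, whence $\lambda_{x}H_{y}^{-1}\varepsilon^{-1}\le CN\ln N$ and the integral term is again $\le CN\ln N\,\Vert\vert G \vert\Vert^{2}_{\omega}$. I expect this fine-direction estimate to be the main obstacle: one must avoid both the crude inverse inequality and the naive one-dimensional Sobolev inequality (whose cross term reintroduces an uncontrolled $\varepsilon^{-1/2}$), and instead exploit that the layer width $\lambda_{x}$ exactly balances the small diffusion coefficient, while verifying throughout that every element invoked for the $\sigma_{\beta}$-gain lies within $O(\sigma_{\beta})$ of $\boldsymbol{x}^{\ast}$. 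Combining the two contributions and applying Young's inequality then gives the stated bound.
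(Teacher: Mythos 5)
Your proof is correct and takes essentially the same route as the paper: for $\boldsymbol{x}^{\ast}\in\Omega_{s}$ the identical combination of a bilinear inverse estimate with the $\sigma_{\beta}$-gain from the term $\tfrac{b}{2}\Vert(\omega^{-1})^{1/2}_{\beta}G\Vert^{2}$, and for $\boldsymbol{x}^{\ast}\in\Omega_{x}\cup\Omega_{y}$ the same mechanism of a fundamental-theorem-of-calculus integration in the fine direction, conversion of the line integral to an area integral at cost $H_{y}^{-1}$ (your 1D inverse inequality is the paper's piecewise-linear $\Delta_{1}$ computation), Cauchy--Schwarz over the width-$\lambda_{x}$ strip, and the bound $\varepsilon\Vert G_{x}\Vert^{2}\le C\vert\vert\vert G\vert\vert\vert^{2}_{\omega}$, with the same cancellation $\lambda_{x}\varepsilon^{-1}=C\ln N$. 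The only (harmless) difference is the direction of integration: the paper integrates $G_{x}$ forward from $\boldsymbol{x}^{\ast}$ to the outflow boundary $x=1$, where $G$ vanishes, while you integrate backward to the transition line $x=1-\lambda_{x}$ and dispose of the extra nodal value $G(1-\lambda_{x},y^{\ast})$ by reusing the $\Omega_{s}$ argument.
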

\begin{proof}
First let $\boldsymbol{x}^{\ast}\in\Omega_{s}$. Let $\tau^{\ast}$ be the unique triangle that has $\boldsymbol{x}^{\ast}$ as its north-east corner. Then
\begin{align*}
\left| (\omega^{-1}G)(\boldsymbol{x}^{\ast})\right|
&\le CN\Vert G \Vert_{\tau^{\ast}}\\
&\le
CN\underset{\tau^{\ast}}{\max}
\left|(\omega^{-1})^{-1/2}_{\beta}\right|\cdot
\Vert (\omega^{-1})^{-1/2}_{\beta}G \Vert_{\tau^{\ast}}
\end{align*}
Calculating $(\omega^{-1})^{-1}_{\beta}(\boldsymbol{x})$ explicitly, we see that
\begin{equation*}
(\omega^{-1})^{-1}_{\beta}(\boldsymbol{x})
\le
C\sigma_{\beta}=CkN^{-1}\ln N\quad\forall\boldsymbol{x}\in\tau^{\ast}
\end{equation*}
Thus
\begin{equation*}
\left| (\omega^{-1}G)(\boldsymbol{x}^{\ast})\right|
\le
CN\ln N+\frac{1}{16}\Vert\vert G \vert\Vert^{2}_{\omega}
\end{equation*}
by means of the arithmetic-geometric mean inequality.
\par
Next, let $\boldsymbol{x}^{\ast}\in\Omega_{x}$.(The case $\boldsymbol{x}^{\ast}\in\Omega_{y}$ is similar.)Write
$\boldsymbol{x}^{\ast}=(x_{i},y_{j})$.
Then
\begin{align*}
\left|\omega^{-1}G(\boldsymbol{x}^{\ast})\right|
&=
\left|G(\boldsymbol{x}^{\ast})\right|\\
&=\left|\int^{1}_{x_{i}}G_{x}(t,y_{j})\mathrm{d}t\right|\\
&\le
CH^{-1}_{y}\int^{1}_{x_{i}}\int^{y_{j+1}}_{y_{j}}
\left| G_{x}(t,y) \right|\mathrm{d}y\mathrm{d}t\\
&\le
CN(\varepsilon\ln N\cdot N^{-1})^{1/2}\Vert G_{x} \Vert_{\Omega_{x}}\\
&\le
CN^{1/2}\ln^{1/2}N\Vert\vert G \vert\Vert
\end{align*}
where $G_{x}(t,y_{j})=\frac{G_{k,j}-G_{k-1,j}}{h_{x}}$
for $(t,y_{j})\in\tau_{kj}$.\\*
Analysis: for the relation of boundary integral and domain integral, we analyze
\begin{equation*}
\int^{1}_{x_{i}}\int^{y_{j+1}}_{y_{j}}
\left| G_{x}(t,y) \right|\mathrm{d}y\mathrm{d}t
=
\sum^{N}_{k=i+1}h_{x}
\int^{y_{j+1}}_{y_{j}}
\left|f(y_{j})\frac{y_{j+1}-y}{H_{y}}+f(y_{j+1})\frac{y-y_{j}}{H_{y}}\right|\mathrm{d}y
\end{equation*}
where $f(y_{j})=\frac{G_{k,j}-G_{k-1,j}}{h_{x}}$ and
$f(y_{j+1})=\frac{G_{k,j+1}-G_{k-1,j+1}}{h_{x}}$.\\*
For $\Delta_{1}:=\int^{y_{j+1}}_{y_{j}}
\left|f(y_{j})\frac{y_{j+1}-y}{H_{y}}+f(y_{j+1})\frac{y-y_{j}}{H_{y}}\right|\mathrm{d}y$, we have
\begin{numcases}{\Delta_{1}=}
\frac{|f(y_{j})|+|f(y_{j+1})|}{2}H_{y}\ge\frac{1}{2}\max\{|f(y_{j}),|f(y_{j+1})|\}H_{y}&\text{if $f(y_{j})f(y_{j+1})\ge0$}\nonumber\\
\frac{1}{2}\frac{f^{2}(y_{j})+f^{2}(y_{j+1})}{|f(y_{j+1})-f(y_{j})|}H_{y}
\ge\frac{1}{4}\max\{|f(y_{j}),|f(y_{j+1})|\}H_{y}
&\text{if $f(y_{j})f(y_{j+1})<0$ }\nonumber
\end{numcases}

\end{proof}
%
%

\par
For $\forall v\in C^{2}(\tau)$, we have
\begin{equation*}
|v_{x}|\le C(|v_{\beta}|+|v_{\eta}|)
\end{equation*}
\begin{equation*}
|v_{xx}|\le C(|v_{\beta\beta}|+|v_{\beta\eta}|+|v_{\eta\eta}|)
\end{equation*}
Similarly, we have
\begin{equation*}
|v_{\beta}|\le C(|v_{x}|+|v_{y}|)
\end{equation*}
\begin{equation*}
|v_{\beta\beta}|\le C(|v_{xx}|+|v_{xy}|+|v_{yy}|)
\end{equation*}
\begin{lemma}\label{lemma derivative of E}
Let $\tau\in\Omega^{N}$. Then
\begin{align*}
\Vert \omega^{1/2}D^{\alpha}E \Vert_{\Omega_{s}}&\le Ck^{-1/2}N^{1/2}\vert\vert\vert G \vert\vert\vert_{\omega}\\
\Vert \omega^{1/2}D^{\alpha}E  \Vert_{\Omega^{N}\setminus\Omega_{s}}&\le Ck^{-1}\varepsilon^{-1/2}\ln^{-1}N\vert\vert\vert G \vert\vert\vert_{\omega}
\end{align*}
where $H_{\tau}=\max\{h_{x,\tau},h_{y,\tau}\}$ and  $|\alpha|=1$, $D^{\alpha}G$ are
$G_{\beta}$ and $G_{\eta}$.
\end{lemma}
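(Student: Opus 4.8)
The plan is to estimate $E=\omega^{-1}G-(\omega^{-1}G)^{I}$ element by element and then assemble, so I would begin from $\Vert\omega^{1/2}D^{\alpha}E\Vert_{D}^{2}=\sum_{\tau\subset D}\Vert\omega^{1/2}D^{\alpha}E\Vert_{\tau}^{2}$ for $D\in\{\Omega_{s},\,\Omega^{N}\setminus\Omega_{s}\}$. On each rectangle $\tau$ the weight $\omega^{-1}$ is log-smooth and, because the mesh widths are small against the scales $\sigma_{\beta},\sigma_{\eta}$ (under Assumption 1 and $\varepsilon\le N^{-1}$ one checks $h_{x,\tau}/\sigma_{\beta}=o(1)$ and $h_{y,\tau}/\sigma_{\eta}=o(1)$ on every element), $\omega$ is locally quasi-constant, $\max_{\tau}\omega\le C\min_{\tau}\omega$. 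This lets me freeze $\omega^{1/2}$ at a point of $\tau$, reduce to an unweighted interpolation bound for $\Vert D^{\alpha}E\Vert_{\tau}$, and restore the weight.

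Since $E$ is the bilinear interpolation error of $v:=\omega^{-1}G$, I would invoke the anisotropic tensor-product estimate $\Vert\partial_{x}E\Vert_{\tau}\le C(h_{x,\tau}\Vert v_{xx}\Vert_{\tau}+h_{y,\tau}\Vert v_{xy}\Vert_{\tau})$ and its symmetric counterpart for $\partial_{y}E$, passing to $E_{\beta},E_{\eta}$ through the equivalences $|E_{\beta}|,|E_{\eta}|\le C(|E_{x}|+|E_{y}|)$ stated above. Expanding $v_{xx},v_{xy},v_{yy}$ by the Leibniz rule and using that $G|_{\tau}$ is bilinear, so $G_{xx}=G_{yy}=0$, leaves three kinds of terms: (i) $(\omega^{-1})_{\ast\ast}G$ with two weight derivatives; (ii) $(\omega^{-1})_{\ast}G_{\ast}$ with one weight derivative and one first derivative of $G$; and (iii) the genuine mixed term $\omega^{-1}G_{xy}$.

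Each kind is matched to a component of $\vert\vert\vert G\vert\vert\vert_{\omega}$. For (i), the bounds $|(\omega^{-1})_{\beta\beta}|\le C\sigma_{\beta}^{-2}\omega^{-1}$ (with the analogous $\eta$- and mixed estimates, noting $\sigma_{\eta}^{-1}\le\sigma_{\beta}^{-1}$ since $\tilde{\varepsilon}\ge N^{-3/2}$) together with $\omega^{-1/2}\le C\sigma_{\beta}^{1/2}(\omega^{-1})_{\beta}^{1/2}$ reproduce the zeroth-order norm $\Vert(\omega^{-1})^{1/2}_{\beta}G\Vert$ with a surplus factor $\sigma_{\beta}^{-3/2}h_{\tau}$. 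For (ii) the remaining $G$-derivative is written in the $\beta,\eta$ frame, and $\Vert\omega^{-1/2}G_{\beta}\Vert$, $\Vert\omega^{-1/2}G_{\eta}\Vert$ are absorbed by the streamline and crosswind norms with coefficients $(\varepsilon+b^{2}\delta)^{-1/2}$ and $\hat{\varepsilon}^{-1/2}$. For (iii) I would not differentiate the weight; instead I would use the inverse inequality $\Vert G_{xy}\Vert_{\tau}\le C\min\{h_{x,\tau},h_{y,\tau}\}^{-1}\Vert G_{\beta}\Vert_{\tau}$, exploiting that on a bilinear function the twist $G_{xy}$ is proportional to $G_{\beta\beta}$, so this term too is carried by the streamline norm.

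Collecting the surplus powers of $h_{\tau},\sigma_{\beta},\sigma_{\eta},\varepsilon,N$ then yields the two claims: on $\Omega_{s}$, where $h_{\tau}\approx N^{-1}$, $\hat{\varepsilon}=\tilde{\varepsilon}$ and $\delta=N^{-1}$, one arrives at the factor $k^{-1/2}N^{1/2}$; on $\Omega^{N}\setminus\Omega_{s}$, where $\delta=0$, $\hat{\varepsilon}=\varepsilon$ and the fine width is $h\approx\varepsilon N^{-1}\ln N$, one arrives at $k^{-1}\varepsilon^{-1/2}\ln^{-1}N$. I expect the main obstacle to be precisely the anisotropic book-keeping: the crosswind contributions in (ii) and (iii), where $\omega^{-1}$ varies on the scale $\sigma_{\beta}$ but $G$ is differentiated across the streamlines, are the terms that force the use of the enhanced diffusion $\tilde{\varepsilon}=\max(\varepsilon,N^{-3/2})$ and of the coarse-to-fine width ratio, and pinning the exponents down to exactly $N^{1/2}$ and $\varepsilon^{-1/2}\ln^{-1}N$ (rather than a weaker power coming from $\hat{\varepsilon}^{-1/2}$) is the delicate point that must be verified region by region.
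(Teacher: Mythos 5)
Your overall skeleton (elementwise estimates, Leibniz expansion of $\omega^{-1}G$, quasi-constancy of $\omega$ on each $\tau$, inverse estimates for the bilinear $G$, and matching each resulting term to a component of $\vert\vert\vert G\vert\vert\vert_{\omega}$) is the same as the paper's, but your choice of interpolation estimate opens a genuine gap. You start from $\Vert E_{x}\Vert_{\tau}\le C\bigl(h_{x,\tau}\Vert v_{xx}\Vert_{\tau}+h_{y,\tau}\Vert v_{xy}\Vert_{\tau}\bigr)$, whose right-hand side contains the mixed derivative $v_{xy}$, and hence produces your kind (iii), the standalone twist term $\omega^{-1}G_{xy}$ with no derivative falling on the weight. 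But every power of $k^{-1}$ in the lemma can only come from weight derivatives, since $\sigma_{\beta}^{-1},\sigma_{\eta}^{-1}\propto k^{-1}$ and no other quantity in the problem involves $k$. Concretely, on $\Omega_{s}$ your treatment of (iii) gives $h_{y,\tau}\Vert\omega^{-1}G_{xy}\Vert_{\tau}\le Ch_{y,\tau}\min\{h_{x,\tau},h_{y,\tau}\}^{-1}\max_{\tau}\omega^{-1/2}\,\Vert\omega^{-1/2}G_{\beta}\Vert_{\tau}$; since $h_{x,\tau}=h_{y,\tau}$ there, after restoring $\max_{\tau}\omega^{1/2}$ and matching to the streamline component this is $C(\varepsilon+b^{2}\delta)^{-1/2}\vert\vert\vert G\vert\vert\vert_{\omega}\approx CN^{1/2}\vert\vert\vert G\vert\vert\vert_{\omega}$ with $C$ independent of $k$: the factor $k^{-1/2}$ in the statement is irretrievably missing, because nothing in this term involves $\sigma_{\beta}$ or $\sigma_{\eta}$. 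Outside $\Omega_{s}$ the same term gives at best $C\varepsilon^{-1/2}\vert\vert\vert G\vert\vert\vert_{\omega}$ instead of $Ck^{-1}\varepsilon^{-1/2}\ln^{-1}N\,\vert\vert\vert G\vert\vert\vert_{\omega}$. This loss is fatal downstream: the next lemma estimates $|B(E,G)|$ by multiplying these bounds by $(\varepsilon+b^{2}\delta)^{1/2}$ (resp.\ $\hat{\varepsilon}^{1/2}$) and needs the product to tend to $0$ as $k\to\infty$ in order to conclude $|B(E,G)|\le\frac{1}{16}\vert\vert\vert G\vert\vert\vert^{2}_{\omega}$; your bound yields an absolute constant that cannot be beaten by enlarging $k$. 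The paper avoids this precisely by using the Apel--Dobrowolski estimates in which the only second-order terms are $g_{xx}$ and $g_{yy}$ and mixed derivatives enter only at third order: since $G$ is bilinear, $G_{xx}=G_{yy}=0$ and all its pure third derivatives vanish, so after Leibniz every surviving term --- including $(\omega^{-1})_{x}G_{xy}$, which is then reduced by the inverse estimate $\Vert G_{xy}\Vert_{\tau}\le Ch_{y,\tau}^{-1}\Vert G_{x}\Vert_{\tau}$ --- carries at least one factor $D^{\alpha}(\omega^{-1})$ and hence at least one $k^{-1}$. The exactness of bilinear interpolation on the twist is the whole point; your first-order estimate does not see it.

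A second, more minor and fixable, flaw: the pointwise inequality $\omega^{-1/2}\le C\sigma_{\beta}^{1/2}(\omega^{-1})_{\beta}^{1/2}$ you invoke for kind (i) is false upstream of $\boldsymbol{x}^{\ast}$, because $(\omega^{-1})_{\beta}/\omega^{-1}=\sigma_{\beta}^{-1}e^{r}/(1+e^{r})\to 0$ as $r=(\boldsymbol{x}-\boldsymbol{x}^{\ast})\cdot\boldsymbol{\beta}/\sigma_{\beta}\to-\infty$. What is true, and what this kind of analysis actually relies on, is the identity $(1/g)''=(1/g)'$ for $g(r)=2/(1+e^{r})$, which gives $(\omega^{-1})_{\beta\beta}=\sigma_{\beta}^{-1}(\omega^{-1})_{\beta}$ directly, so second $\beta$-derivatives of the weight are tied to $(\omega^{-1})_{\beta}$ without ever passing through $\omega^{-1}$. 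Your reduction of kind (ii) to the streamline and crosswind components, and your quasi-constancy argument, are sound and agree with what the paper does implicitly.
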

\begin{proof}
Assume $p\in[1,\infty]$ and $g\in C^{3}(\tau)$. Then (see \cite[Theorem 4]{Apel1Dobr2:1992-Anisotropic})
\begin{align*}
\Vert (g-g^{I})_{x} \Vert_{L^{p}(\tau)} &\le
C\left( h^{2}_{x,\tau}\Vert g_{xxx} \Vert_{L^{p}(\tau)}+ h_{x,\tau}h_{y,\tau}\Vert g_{xxy} \Vert_{L^{p}(\tau)} +
h^{2}_{y,\tau}\Vert g_{xyy} \Vert_{L^{p}(\tau)}
\right)\\
&+Ch_{x,\tau}\Vert g_{xx} \Vert_{L^{p}(\tau)},\\
\Vert (g-g^{I})_{y} \Vert_{L^{p}(\tau)} &\le
C\left( h^{2}_{x,\tau}\Vert g_{xxy} \Vert_{L^{p}(\tau)}+ h_{x,\tau}h_{y,\tau}\Vert g_{xyy} \Vert_{L^{p}(\tau)} +
h^{2}_{y,\tau}\Vert g_{yyy} \Vert_{L^{p}(\tau)}
\right)\\
&+Ch_{y,\tau}\Vert g_{yy} \Vert_{L^{p}(\tau)}.
\end{align*}
or (see \cite[Comment 2.15]{Apel:1999-Anisotropic})
\begin{align*}
\Vert (g-g^{I})_{x} \Vert_{L^{2}(\tau)} &\le
Ch^{2}_{y,\tau}\Vert g_{xyy} \Vert_{L^{2}(\tau)}
+Ch_{x,\tau}\Vert g_{xx} \Vert_{L^{2}(\tau)},\\
\Vert (g-g^{I})_{y} \Vert_{L^{2}(\tau)} &\le
Ch^{2}_{x,\tau}\Vert g_{xxy} \Vert_{L^{2}(\tau)}+Ch_{y,\tau}\Vert g_{yy} \Vert_{L^{2}(\tau)}.
\end{align*}
In the following analysis, $D^{\alpha}v$ denotes the directional derivative of $v$ along $\beta$ or $\eta$ for different orders. The following analysis makes use of the former estimates (The latter will make the analysis more shorter).
\par
For $\tau\in\Omega^{N}$, we have
\begin{align*}
&\Vert \omega^{1/2}E_{\beta} \Vert_{\tau}
\le
C\underset{\tau}{\max}\omega^{1/2}(\Vert E_{x} \Vert_{\tau}+\Vert E_{y} \Vert_{\tau})\\
&\le
C\underset{\tau}{\max}\omega^{1/2}\{
h^{2}_{x,\tau}\Vert(\omega^{-1}G)_{xxx}\Vert_{\tau}
+h_{x,\tau}\Vert(\omega^{-1}G)_{xx}\Vert_{\tau}
+h_{x,\tau}H_{\tau}\Vert(\omega^{-1}G)_{xxy}\Vert_{\tau}\\
&+
H_{\tau}h_{y,\tau}\Vert(\omega^{-1}G)_{xyy}\Vert_{\tau}
+h^{2}_{y,\tau}\Vert(\omega^{-1}G)_{yyy}\Vert_{\tau}
+h_{y,\tau}\Vert(\omega^{-1}G)_{yy}\Vert_{\tau}
\}\\
&\le
Ch^{2}_{x,\tau}(\Vert(\omega^{-1})_{xxx}G\Vert_{\tau}+\Vert(\omega^{-1})_{xx}G_{x}\Vert_{\tau})
+Ch_{x,\tau}(\Vert(\omega^{-1})_{xx}G\Vert_{\tau}+\Vert(\omega^{-1})_{x}G_{x}\Vert_{\tau})\\
&+Ch^{2}_{y,\tau}(\Vert(\omega^{-1})_{yyy}G\Vert_{\tau}+\Vert(\omega^{-1})_{yy}G_{y}\Vert_{\tau})
+Ch_{y,\tau}(\Vert(\omega^{-1})_{yy}G\Vert_{\tau}+\Vert(\omega^{-1})_{y}G_{y}\Vert_{\tau})\\
&+Ch_{x,\tau}H_{\tau}(\Vert(\omega^{-1})_{xxy}G\Vert_{\tau}+\Vert(\omega^{-1})_{xx}G_{y}\Vert_{\tau}
+\Vert(\omega^{-1})_{xy}G_{x}\Vert_{\tau}+\Vert(\omega^{-1})_{x}G_{xy}\Vert_{\tau})\\
&+H_{\tau}h_{y,\tau}(\Vert(\omega^{-1})_{xyy}G\Vert_{\tau}+\Vert(\omega^{-1})_{xy}G_{y}\Vert_{\tau}
+\Vert(\omega^{-1})_{yy}G_{x}\Vert_{\tau}+\Vert(\omega^{-1})_{y}G_{xy}\Vert_{\tau})\\
&\le
CH_{\tau}^{2}
\sum^{3}_{k=2}
\sum_{
\begin{array}{l}
\scriptscriptstyle |\alpha|+|\gamma|=3\\
\scriptscriptstyle |\alpha|\ge k
\end{array}
}
\Vert D^{\alpha}(\omega^{-1})D^{\gamma}G\Vert_{\tau}
+CH_{\tau}
\sum^{2}_{k=1}
\sum_{
\begin{array}{l}
\scriptscriptstyle |\alpha|+|\gamma|=2\\
\scriptscriptstyle |\alpha|\ge k
\end{array}
}
\Vert D^{\alpha}(\omega^{-1})D^{\gamma}G\Vert_{\tau}
\end{align*}
where we have used the following analysis for $\sum_{|\alpha|=1,|\gamma|=2}D^{\alpha}(\omega^{-1})D^{\gamma}G$ :
\begin{align*}
\Vert (\omega^{-1})_{x}G_{xy}\Vert_{\tau}
&\le
C\Vert \sum_{|\alpha|=1}|D^{\alpha}(\omega^{-1})|\cdot G_{xy} \Vert_{\tau}\\
&\le
C\underset{\tau}{\max} \sum_{|\alpha|=1}|D^{\alpha}(\omega^{-1})|\cdot\Vert G_{xy} \Vert_{\tau}\\
&\le
Ch^{-1}_{y,\tau}\underset{\tau}{\max} \sum_{|\alpha|=1}|D^{\alpha}(\omega^{-1})|\cdot\Vert G_{x} \Vert_{\tau}\\
&\le
Ch^{-1}_{y,\tau} \big\Vert \sum_{|\alpha|=1}|D^{\alpha}(\omega^{-1})|\cdot G_{x} \big\Vert_{\tau}.
\end{align*}
\par

The same analysis can be applied to $\Vert\omega^{1/2}E_{\eta}\Vert_{\tau}$.
\par
For $\tau\in\Omega_{s}$, we have
\begin{align*}
&\Vert \omega^{1/2}E_{\beta} \Vert_{\tau}
\le
Ck^{-5/2}N^{-2}
\big[(\sigma^{-5/2}_{\beta}+\sigma^{-3/2}_{\beta}\sigma^{-1}_{\eta}+\sigma^{-1/2}_{\beta}\sigma^{-2}_{\eta})
\underset{\tau}{\max}(\omega^{-1})^{1/2}_{\beta}
+\sigma^{-3}_{\eta}\underset{\tau}{\max}\omega^{-1/2}\big]\Vert G \Vert_{\tau}\\
&+Ck^{-3/2}H^{2}_{\tau}
\big[
(\sigma^{-3/2}_{\beta}+\sigma^{-1/2}_{\beta}\sigma^{-1}_{\eta})\underset{\tau}{\max}(\omega^{-1})^{1/2}_{\beta}
+\sigma^{-2}_{\eta}\underset{\tau}{\max}\omega^{-1/2}
\big]\cdot N\Vert G \Vert_{\tau}\\
&+Ck^{-3/2}H_{\tau}
\big[(\sigma^{-3/2}_{\beta}+\sigma^{-1/2}_{\beta}\sigma^{-1}_{\eta})
\underset{\tau}{\max}(\omega^{-1})^{1/2}_{\beta}
+\sigma^{-2}_{\eta}\underset{\tau}{\max}\omega^{-1/2}\big]\Vert G \Vert_{\tau}\\
&
+C\underset{\tau}{\max}\omega^{1/2}H_{\tau}
\sum_{|\alpha|=1}\Vert D^{\alpha}(\omega^{-1}) \Vert_{L^{\infty}(\tau)}\cdot \sum_{|\gamma|=1}\Vert D^{\gamma}G \Vert_{\tau}\\
&\le
Ck^{-1/2}N^{1/2}\vert\vert\vert G \vert\vert\vert_{\omega}
\end{align*}
where we have used the estimates of $\omega^{-1}$, standard inverse estimates and H{\"o}lder inequalities.
Similarly, we have
\begin{equation*}
\Vert \omega^{1/2}E_{\eta} \Vert_{\Omega_{s}}
\le
Ck^{-1/2}N^{1/2}\vert\vert\vert G \vert\vert\vert_{\omega}.
\end{equation*}
\par

For $\tau\in\Omega^{N}\setminus\Omega^{N}_{s}$, we have
\begin{align*}
&\Vert \omega^{1/2}E_{\beta} \Vert_{\tau}
\le
Ck^{-5/2}H^{2}_{\tau}
\big[(\sigma^{-5/2}_{\beta}+\sigma^{-3/2}_{\beta}\sigma^{-1}_{\eta}+\sigma^{-1/2}_{\beta}\sigma^{-2}_{\eta})
\underset{\tau}{\max}(\omega^{-1})^{1/2}_{\beta}
+\sigma^{-3}_{\eta}\underset{\tau}{\max}\omega^{-1/2}
\big]\Vert G \Vert_{\tau}\\
&+Ck^{-2}\varepsilon^{-1/2}H^{2}_{\tau}
\big[
\sigma^{-2}_{\beta}+\sigma^{-1}_{\beta}\sigma^{-1}_{\eta}
+\sigma^{-2}_{\eta}
\big]\underset{\tau}{\max}\omega^{-1/2}\cdot \varepsilon^{1/2}\sum_{|\gamma|=1}\Vert D^{\gamma}G \Vert_{\tau}\\
&+Ck^{-3/2}H_{\tau}
\big[
(\sigma^{-3/2}_{\beta}+\sigma^{-1/2}_{\beta}\sigma^{-1}_{\eta})
\underset{\tau}{\max}(\omega^{-1})^{1/2}_{\beta}
+\sigma^{-2}_{\eta}\underset{\tau}{\max}\omega^{-1/2}
\big]
\Vert G \Vert_{\tau}\\
&+
Ck^{-1}\varepsilon^{-1/2}H_{\tau}
\big[
\sigma^{-1}_{\beta}+\sigma^{-1}_{\eta}
\big]\underset{\tau}{\max}\omega^{-1/2}\cdot \varepsilon^{1/2}\sum_{|\gamma|=1}\Vert D^{\gamma}G \Vert_{\tau}\\
&\le
Ck^{-1}\varepsilon^{-1/2}\ln^{-1}N\vert\vert\vert G \vert\vert\vert_{\omega}
\end{align*}
Similarly, we have
\begin{equation*}
\Vert \omega^{1/2}E_{\eta} \Vert_{\Omega\setminus\Omega_{s}}
\le
Ck^{-1}\varepsilon^{-1/2}\ln^{-1}N\vert\vert\vert G \vert\vert\vert_{\omega}.
\end{equation*}
\end{proof}

%
%
\begin{lemma}\label{lemma E}
Let $\tau\in\Omega^{N}_{s}$. Let $E=\omega^{-1}G-(\omega^{-1}G)^{I}$ where $(\omega^{-1}G)^{I}$ denote the bilinear function that interpolates to $\omega^{-1}G$ at the vertices of $\tau$. Then
\begin{align*}
&\Vert \omega^{1/2}E \Vert_{\Omega_{s}} \le Ck^{-1}N^{-1/2}\vert\vert\vert G \vert\vert\vert_{\omega}\\
&\Vert \omega^{1/2}E \Vert_{\Omega\setminus\Omega_{s}}
\le
Ck^{-1}\varepsilon^{1/2}\vert\vert\vert G \vert\vert\vert_{\omega}
\end{align*}
where $H_{\tau}=\max\{h_{x,\tau},h_{y,\tau}\}$.
\end{lemma}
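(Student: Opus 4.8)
The plan is to treat $E$ exactly as in Lemma \ref{lemma derivative of E}, but starting from the $L^{2}$ interpolation error estimate for the function itself rather than for its gradient. On a single element $\tau$ the bilinear interpolant satisfies the anisotropic bound
\[
\Vert \omega^{1/2}E\Vert_{\tau}\le C\max_{\tau}\omega^{1/2}\left(h^{2}_{x,\tau}\Vert(\omega^{-1}G)_{xx}\Vert_{\tau}+h_{x,\tau}h_{y,\tau}\Vert(\omega^{-1}G)_{xy}\Vert_{\tau}+h^{2}_{y,\tau}\Vert(\omega^{-1}G)_{yy}\Vert_{\tau}\right),
\]
which is the function-analogue of the Apel--Dobrowolski estimates quoted in Lemma \ref{lemma derivative of E}. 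The crucial difference is that here only second derivatives of $\omega^{-1}G$ enter, each carrying two powers of the mesh size; this is precisely what produces the extra smallness relative to Lemma \ref{lemma derivative of E}.

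First I would expand the second derivatives by the Leibniz rule. Since $G$ is bilinear on $\tau$, we have $G_{xx}=G_{yy}=0$ and only $G_{xy}$ survives among the second derivatives of $G$, so the right-hand side reduces to three families of terms: $D^{\alpha}(\omega^{-1})\,G$ with $|\alpha|=2$; $D^{\alpha}(\omega^{-1})\,D^{\gamma}G$ with $|\alpha|=|\gamma|=1$; and $\omega^{-1}G_{xy}$. Each $\beta$-derivative of $\omega^{-1}$ costs a factor $\sigma^{-1}_{\beta}$ and each $\eta$-derivative a factor $\sigma^{-1}_{\eta}$ relative to $\omega^{-1}$; since $H_{\tau}\ll\sigma_{\beta},\sigma_{\eta}$ the weight is quasi-uniform on $\tau$, so all the maxima $\max_{\tau}\omega^{\pm1/2}$ may be pulled through and paired off. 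For the terms containing $G_{xy}$ I would apply the inverse inequality $\Vert G_{xy}\Vert_{\tau}\le Ch^{-1}_{y,\tau}\Vert G_{x}\Vert_{\tau}$, dividing by the larger of the two mesh sizes, to return to first derivatives of $G$.

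It then remains to convert the elementwise quantities into the energy norm. For the terms proportional to $\Vert G\Vert_{\tau}$ I would use $\Vert G\Vert_{\tau}\le\max_{\tau}(\omega^{-1})^{-1/2}_{\beta}\,\Vert(\omega^{-1})^{1/2}_{\beta}G\Vert_{\tau}$ together with the pointwise bound $(\omega^{-1})^{-1}_{\beta}\le C\sigma_{\beta}$ computed earlier and $\Vert(\omega^{-1})^{1/2}_{\beta}G\Vert_{\tau}\le C\vert\vert\vert G\vert\vert\vert_{\omega}$; for the gradient terms I would pass from Cartesian to characteristic derivatives via $|G_{x}|,|G_{y}|\le C(|G_{\beta}|+|G_{\eta}|)$ and then use $\Vert\omega^{-1/2}G_{\beta}\Vert_{\tau}\le(\varepsilon+b^{2}\delta)^{-1/2}\vert\vert\vert G\vert\vert\vert_{\omega}$ and $\Vert\omega^{-1/2}G_{\eta}\Vert_{\tau}\le\hat{\varepsilon}^{-1/2}\vert\vert\vert G\vert\vert\vert_{\omega}$, where throughout $\vert\vert\vert G\vert\vert\vert_{\omega}$ is understood as its restriction to $\tau$. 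Inserting the explicit scales $\sigma_{\beta}=kN^{-1}\ln N$, $\sigma_{\eta}=k\tilde{\varepsilon}^{1/2}\ln N$ and the mesh sizes, separately in $\Omega_{s}$ (where $\delta=N^{-1}$, $\hat{\varepsilon}=\tilde{\varepsilon}$, $h_{x,\tau},h_{y,\tau}\sim N^{-1}$) and in $\Omega\setminus\Omega_{s}$ (where $\delta=0$, $\hat{\varepsilon}=\varepsilon$, and one mesh size is the fine $h\sim\varepsilon N^{-1}\ln N$), and finally summing the squared elementwise bounds over $\tau$, yields the two stated estimates.

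The main obstacle is the same one that motivated the artificial crosswind diffusion: among the gradient terms the dangerous ones are those in which a Cartesian derivative of $G$ is resolved into its crosswind component $G_{\eta}$, since $G_{\eta}$ is controlled only through the small coefficient $\hat{\varepsilon}$, and in $\Omega_{s}$ one must use $\hat{\varepsilon}=\tilde{\varepsilon}=\max(\varepsilon,N^{-3/2})$ rather than $\varepsilon$. Checking that the worst crosswind term is still bounded by $Ck^{-1}N^{-1/2}\vert\vert\vert G\vert\vert\vert_{\omega}$ in $\Omega_{s}$ and by $Ck^{-1}\varepsilon^{1/2}\vert\vert\vert G\vert\vert\vert_{\omega}$ in $\Omega\setminus\Omega_{s}$ is the heart of the computation; it relies on taking $k$ large, on the standing assumption $\varepsilon\le N^{-1}$, and on the precise choice of $\sigma_{\beta}$ and $\sigma_{\eta}$, exactly as in Lemma \ref{lemma derivative of E}. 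Because the function interpolation estimate carries one more power of the mesh size than the gradient estimate used there, every term is correspondingly smaller, which is what upgrades the bounds of Lemma \ref{lemma derivative of E} to the sharper ones claimed here.
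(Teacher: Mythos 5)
There is a genuine gap, and it is exactly in the estimate you describe as ``the heart of the computation.'' Your direct elementwise interpolation argument is the paper's \emph{first} step, and it does give the $\Omega_{s}$ bound; but on $\Omega\setminus\Omega_{s}$ it cannot give $Ck^{-1}\varepsilon^{1/2}\vert\vert\vert G\vert\vert\vert_{\omega}$, and the paper itself records this: its intermediate result from the same computation is
\begin{equation*}
\Vert \omega^{1/2}E \Vert_{\Omega\setminus\Omega_{s}}
\le
Ck^{-1}\varepsilon^{-1/2}N^{-1}\vert\vert\vert G \vert\vert\vert_{\omega},
\end{equation*}
which is \emph{weaker} than the claimed bound, since $\varepsilon^{-1/2}N^{-1}\ge\varepsilon^{1/2}$ is equivalent to $\varepsilon\le N^{-1}$ (the standing assumption), with equality only in the borderline case $\varepsilon=N^{-1}$. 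The obstruction is structural, not a matter of bookkeeping: in $\Omega_{x}$ the Leibniz expansion of $h^{2}_{y,\tau}(\omega^{-1}G)_{yy}$ produces the term $H^{2}_{y}\Vert\omega^{1/2}(\omega^{-1})_{y}G_{y}\Vert_{\tau}$, where $H_{y}\sim N^{-1}$ is the \emph{coarse} mesh size, $(\omega^{-1})_{y}$ costs $\sigma^{-1}_{\beta}\sim k^{-1}N\ln^{-1}N$ in the worst case, and $G_{y}$ (resolved into $G_{\beta}$, $G_{\eta}$) is controlled only through $\varepsilon^{1/2}\Vert\omega^{-1/2}\nabla G\Vert\lesssim\vert\vert\vert G\vert\vert\vert_{\omega}$ because $\delta=0$ and $\hat{\varepsilon}=\varepsilon$ outside $\Omega_{s}$. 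This contributes $\sim k^{-1}N^{-1}\ln^{-1}N\cdot\varepsilon^{-1/2}\,\vert\vert\vert G\vert\vert\vert_{\omega}$, which exceeds $\varepsilon^{1/2}\vert\vert\vert G\vert\vert\vert_{\omega}$ by the factor $\varepsilon^{-1}N^{-1}\ln^{-1}N$ whenever $\varepsilon\ll N^{-1}$; no choice of $k$ and no use of $\varepsilon\le N^{-1}$ closes a gap that is a power of $N^{-1}/\varepsilon$.

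The missing idea is the second stage of the paper's proof, borrowed from Lin{\ss}--Stynes: since $G$, and hence $E=\omega^{-1}G-(\omega^{-1}G)^{I}$, vanishes on $\partial\Omega$, every point $\boldsymbol{x}\in\Omega_{x}$ can be joined to the boundary by a crosswind segment (direction $\eta$) of length $O(\lambda_{x})=O(\varepsilon\ln N)$ staying in the layer region, so that
\begin{equation*}
(\omega^{1/2}E)(\boldsymbol{x})=\int_{\boldsymbol{x}}^{\Gamma(\boldsymbol{x})}(\omega^{1/2}E)_{\eta}\,\mathrm{d}s,
\qquad
\Vert \omega^{1/2}E \Vert^{2}_{\Omega_{x}}
\le
C\lambda^{2}_{x}\left\{\Vert(\omega^{1/2})_{\eta}E\Vert^{2}_{\Omega_{x}}+\Vert\omega^{1/2}E_{\eta}\Vert^{2}_{\Omega_{x}}\right\}.
\end{equation*}
Inserting the derivative bound of Lemma \ref{lemma derivative of E}, $\Vert\omega^{1/2}E_{\eta}\Vert_{\Omega\setminus\Omega_{s}}\le Ck^{-1}\varepsilon^{-1/2}\ln^{-1}N\vert\vert\vert G\vert\vert\vert_{\omega}$, together with the crude direct bound above for the zeroth-order term, gives $\lambda_{x}\cdot\varepsilon^{-1/2}\ln^{-1}N\sim\varepsilon^{1/2}$, i.e.\ the claimed estimate. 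So the extra power of smallness in the layer regions comes not from the interpolation estimate carrying an extra mesh-size factor (your reasoning), but from the small crosswind width of $\Omega_{x}\cup\Omega_{y}\cup\Omega_{xy}$ combined with the vanishing of $E$ on the boundary; without this step your argument proves only the weaker $\varepsilon^{-1/2}N^{-1}$ bound.
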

\begin{proof}
We make use of the following standard interpolation error bounds
\begin{equation*}
\Vert u-u^{I} \Vert_{L^{p}(\tau)}\le
 h^{2}_{x,\tau}\Vert u_{xx} \Vert_{L^{p}(\tau)}+
 h^{2}_{y,\tau}\Vert u_{yy} \Vert_{L^{p}(\tau)}
\end{equation*}
where $p\in [1,\infty]$ and $u\in C(\overline{\tau})\cap W^{2,p}(\tau)$.
\par
Then, we have
\begin{align*}
&\Vert E \Vert_{\tau}\le
h^{2}_{x,\tau}\Vert(\omega^{-1}G)_{xx}\Vert_{\tau}
+h^{2}_{y,\tau}\Vert(\omega^{-1}G)_{yy}\Vert_{\tau}
\\
&\le
CH^{2}_{\tau}  \sum_{|\alpha|=2}\Vert D^{\alpha}(\omega^{-1})\cdot G \Vert_{\tau}
+CH^{2}_{\tau}\Vert (|(\omega^{-1})_{\beta}|+|(\omega^{-1})_{\eta}|)\cdot (|G_{\beta}|+|G_{\eta}|) \Vert_{\tau}
\\
&\le
CH^{2}_{\tau}\left\{
\Vert(\omega^{-1})_{\beta}G_{\beta} \Vert_{\tau}+\Vert(\omega^{-1})_{\beta}G_{\eta} \Vert_{\tau}
+\Vert(\omega^{-1})_{\eta}G_{\beta} \Vert_{\tau}+\Vert(\omega^{-1})_{\eta}G_{\eta} \Vert_{\tau}
\right\}\\
&+
CH^{2}_{\tau} \Vert \sum_{|\alpha|=2}|D^{\alpha}(\omega^{-1})|\cdot G \Vert_{\tau}.
\end{align*}

\par
From the above inequality, we have
\begin{equation*}
\Vert \omega^{1/2}E \Vert_{\Omega_{s}}
\le
Ck^{-1}N^{-1/2}\vert\vert\vert G \vert\vert\vert_{\omega}
\end{equation*}
and
\begin{equation*}
\Vert \omega^{1/2}E \Vert_{\Omega\setminus\Omega_{s}}
\le
Ck^{-1}\varepsilon^{-1/2}N^{-1}\vert\vert\vert G \vert\vert\vert_{\omega}.
\end{equation*}

Following the techniques of (see\cite[Lemma 4.4]{Linb1Styn2:2001-SDFEM}), we have
\begin{equation*}
(\omega^{1/2}E)(\boldsymbol{x})=\int_{\boldsymbol{x}}^{\Gamma(\boldsymbol{x})}(\omega^{1/2}E)_{\eta}\mathrm{d}s
\end{equation*}
where $\boldsymbol{x}\in\Omega\setminus\Omega_{s}$, $\Gamma(\boldsymbol{x})\in \Gamma$ satisfies
  $(\boldsymbol{x}-\Gamma(\boldsymbol{x}))\cdot \beta=0$ and the following condition:
 \begin{equation*}
 \begin{array}{r}
 \text{For $\forall \boldsymbol{y}\in \Gamma$, $(\boldsymbol{x}-\boldsymbol{y})\cdot \beta=0$,}\\
 |\boldsymbol{x}-\Gamma(\boldsymbol{x})|=\underset{y}{\min} |\boldsymbol{x}-\boldsymbol{y}|.
 \end{array}
 \end{equation*}
From the above representation of $\omega^{1/2}E$, we have
\begin{align*}
&\Vert \omega^{1/2}E \Vert^{2}_{\Omega_{x}}=
\int^{1-\lambda_{y}}_{\lambda_{0}}\int_{l(\boldsymbol{x}_{lu},\Gamma(\boldsymbol{x}_{lu}))}
\left[\int_{\boldsymbol{x}}^{\Gamma(\boldsymbol{x})}(\omega^{1/2}E)_{\eta}\mathrm{d}s\right]^{2}\mathrm{d}\Omega\\
&+\int_{1-\lambda_{x}}^{1}\int_{l(\boldsymbol{x}_{u},\Gamma(\boldsymbol{x}_{u}))}
\left[\int_{\boldsymbol{x}}^{\Gamma(\boldsymbol{x})}(\omega^{1/2}E)_{\eta}\mathrm{d}s\right]^{2}\mathrm{d}\Omega
+\int_{0}^{\lambda_{0}}\int_{l(\boldsymbol{x}_{ld},\Gamma(\boldsymbol{x}_{ld}))}
\left[\int_{\boldsymbol{x}}^{\Gamma(\boldsymbol{x})}(\omega^{1/2}E)_{\eta}\mathrm{d}s\right]^{2}\mathrm{d}\Omega\\
&\le
C\lambda^{2}_{x}
\left\{\Vert(\omega^{1/2})_{\eta}E\Vert^{2}_{\Omega_{x}}+
 \Vert\omega^{1/2}E_{\eta}\Vert^{2}_{\Omega_{x}}
\right\}\\
&\le
C\varepsilon^{2}\ln^{2}N
\left\{\sigma^{-2}_{\eta}\Vert \omega^{1/2}E \Vert^{2}_{\Omega_{x}}+
 \Vert\omega^{1/2}E_{\eta}\Vert^{2}_{\Omega_{x}}
\right\}\\
&\le
Ck^{-2}\varepsilon^{2}\ln^{2}N\{N^{3/2}\ln^{-2}N \cdot\varepsilon^{-1}N^{-2}+
 \varepsilon^{-1}\ln^{-2}N \}
\vert\vert\vert G \vert\vert\vert^{2}_{\omega}\\
&\le
Ck^{-2}\varepsilon
\vert\vert\vert G \vert\vert\vert^{2}_{\omega}
\end{align*}
where $\lambda_{0}=\frac{b_{1}}{b_{2}}\lambda_{x}$ and
\begin{itemize}
  \item
   $x_{lu}\in \{(1-\lambda_{x},y):\lambda_{0}\le y\le 1-\lambda_{y} \}$;
  \item
  $x_{ld}\in \{(1-\lambda_{x},y):0\le y \le\lambda_{0} \}$;
  \item
  $x_{u}\in \{(x,1-\lambda_{y}): 1-\lambda_{x}\le x \le 1 \}$.
\end{itemize}

\end{proof}

\begin{lemma}
If $\sigma_{\beta}=kN^{-1}\ln N$ and $\sigma_{\eta}=k\tilde{\varepsilon}^{1/2}\ln N$ , where $k>1$ sufficiently
large and independent of $N$ and $\varepsilon$. Then
\begin{equation*}
B((\omega^{-1}G)^{I}-\omega^{-1}G,G)\le \frac{1}{16}\Vert\vert G \vert\Vert^{2}_{\omega}.
\end{equation*}
\end{lemma}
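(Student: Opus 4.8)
The plan is to rewrite the left-hand side through $E=\omega^{-1}G-(\omega^{-1}G)^{I}$, so that $(\omega^{-1}G)^{I}-\omega^{-1}G=-E$ and hence $B((\omega^{-1}G)^{I}-\omega^{-1}G,G)=-B(E,G)$. It therefore suffices to bound $|B(E,G)|$ by $\tfrac{1}{16}\vert\vert\vert G\vert\vert\vert_{\omega}^{2}$. Expanding the bilinear form \eqref{bilinear SDFEM with ACD} gives
\begin{equation*}
B(E,G)=(\varepsilon+b^{2}\delta)(E_{\beta},G_{\beta})+\hat{\varepsilon}(E_{\eta},G_{\eta})-b(1-\delta)(E,G_{\beta})+(E,G),
\end{equation*}
and I would estimate the four terms separately, splitting the weight as $\omega^{1/2}\cdot\omega^{-1/2}$ so that in each pairing the factor carrying $G$ matches one of the four pieces of $\vert\vert\vert G\vert\vert\vert_{\omega}$.

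For the two diffusion terms I would use Cauchy--Schwarz in the form $(\varepsilon+b^{2}\delta)|(E_{\beta},G_{\beta})|\le(\varepsilon+b^{2}\delta)^{1/2}\Vert\omega^{1/2}E_{\beta}\Vert\cdot(\varepsilon+b^{2}\delta)^{1/2}\Vert\omega^{-1/2}G_{\beta}\Vert$, and similarly for the $\eta$-term with $\hat{\varepsilon}$. The second factor is $\le\vert\vert\vert G\vert\vert\vert_{\omega}$ by definition of the weighted norm, while the first factor is controlled by Lemma~\ref{lemma derivative of E}: on $\Omega_{s}$ one has $(\varepsilon+b^{2}\delta)^{1/2}\le CN^{-1/2}$ and $\hat{\varepsilon}^{1/2}=\tilde{\varepsilon}^{1/2}\le N^{-1/2}$, which cancels the factor $N^{1/2}$ in $\Vert\omega^{1/2}D^{\alpha}E\Vert_{\Omega_{s}}\le Ck^{-1/2}N^{1/2}\vert\vert\vert G\vert\vert\vert_{\omega}$; on $\Omega\setminus\Omega_{s}$ one has $\delta=0$ and $\hat{\varepsilon}=\varepsilon$, and the factor $\varepsilon^{1/2}$ cancels the $\varepsilon^{-1/2}$ in $\Vert\omega^{1/2}D^{\alpha}E\Vert_{\Omega\setminus\Omega_{s}}\le Ck^{-1}\varepsilon^{-1/2}\ln^{-1}N\vert\vert\vert G\vert\vert\vert_{\omega}$. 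Both regions then contribute at most $Ck^{-1/2}\vert\vert\vert G\vert\vert\vert_{\omega}^{2}$. The reaction term is the easiest: $|(E,G)|\le\Vert\omega^{1/2}E\Vert\cdot\Vert\omega^{-1/2}G\Vert\le\Vert\omega^{1/2}E\Vert\cdot\vert\vert\vert G\vert\vert\vert_{\omega}$, and Lemma~\ref{lemma E} bounds $\Vert\omega^{1/2}E\Vert$ by $Ck^{-1}N^{-1/2}\vert\vert\vert G\vert\vert\vert_{\omega}$ on $\Omega_{s}$ and by $Ck^{-1}\varepsilon^{1/2}\vert\vert\vert G\vert\vert\vert_{\omega}$ on $\Omega\setminus\Omega_{s}$, both $o(1)\vert\vert\vert G\vert\vert\vert_{\omega}^{2}$ since $\varepsilon\le N^{-1}$.

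The main obstacle is the convection term $-b(1-\delta)(E,G_{\beta})$ on $\Omega\setminus\Omega_{s}$, where the streamline-diffusion stabilization is absent ($\delta=0$) so that the weighted norm controls only $\Vert\omega^{-1/2}G_{\beta}\Vert\le\varepsilon^{-1/2}\vert\vert\vert G\vert\vert\vert_{\omega}$. Writing $b(1-\delta)|(E,G_{\beta})|\le b(1-\delta)(\varepsilon+b^{2}\delta)^{-1/2}\Vert\omega^{1/2}E\Vert\cdot(\varepsilon+b^{2}\delta)^{1/2}\Vert\omega^{-1/2}G_{\beta}\Vert$, the second factor is again $\le\vert\vert\vert G\vert\vert\vert_{\omega}$, but the prefactor on $\Omega\setminus\Omega_{s}$ reduces to $b\varepsilon^{-1/2}\Vert\omega^{1/2}E\Vert_{\Omega\setminus\Omega_{s}}$. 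The naive interpolation bound $\Vert\omega^{1/2}E\Vert_{\Omega\setminus\Omega_{s}}\le Ck^{-1}\varepsilon^{-1/2}N^{-1}\vert\vert\vert G\vert\vert\vert_{\omega}$ would leave a factor $Ck^{-1}\varepsilon^{-1}N^{-1}$, which is not small because $\varepsilon\le N^{-1}$. This is exactly why Lemma~\ref{lemma E} sharpens the estimate to $\Vert\omega^{1/2}E\Vert_{\Omega\setminus\Omega_{s}}\le Ck^{-1}\varepsilon^{1/2}\vert\vert\vert G\vert\vert\vert_{\omega}$ through the line-integral representation of $\omega^{1/2}E$ along the $\eta$-characteristics; with it the prefactor becomes $b\varepsilon^{-1/2}\cdot Ck^{-1}\varepsilon^{1/2}=Ck^{-1}$. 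On $\Omega_{s}$ the same manipulation gives prefactor $\le b\cdot b^{-1}N^{1/2}\cdot Ck^{-1}N^{-1/2}=Ck^{-1}$ after using $(\varepsilon+b^{2}\delta)^{-1/2}\le b^{-1}N^{1/2}$. Collecting the four bounds yields $|B(E,G)|\le Ck^{-1/2}\vert\vert\vert G\vert\vert\vert_{\omega}^{2}$ with $C$ independent of $N$, $\varepsilon$ and $\boldsymbol{x}^{\ast}$, and choosing $k$ sufficiently large makes the right-hand side at most $\tfrac{1}{16}\vert\vert\vert G\vert\vert\vert_{\omega}^{2}$, which completes the argument.
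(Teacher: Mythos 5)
Your proposal is correct and follows essentially the same route as the paper: write the left-hand side as $-B(E,G)$, expand $B(E,G)$ into its four terms via weighted Cauchy--Schwarz, and control the $E$-factors with Lemma~\ref{lemma derivative of E} and Lemma~\ref{lemma E} so that every term carries a factor $Ck^{-1/2}$, which is absorbed by taking $k$ large. In fact your write-up is more explicit than the paper's (which stops at the Cauchy--Schwarz display and cites the two lemmas), and you correctly identify the key point the paper leaves implicit: on $\Omega\setminus\Omega_{s}$ the convection term only closes because Lemma~\ref{lemma E} gives the sharpened bound $\Vert\omega^{1/2}E\Vert_{\Omega\setminus\Omega_{s}}\le Ck^{-1}\varepsilon^{1/2}\vert\vert\vert G\vert\vert\vert_{\omega}$, which exactly cancels the $\varepsilon^{-1/2}$ loss from $\Vert\omega^{-1/2}G_{\beta}\Vert$.
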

\begin{proof}
Cauchy-Schwarz¡¯s inequality gives
\begin{align*}
\left|B(E,G)\right|
&\le
(\varepsilon+b^{2}\delta)^{1/2}\Vert \omega^{1/2}E_{\beta} \Vert\cdot
(\varepsilon+b^{2}\delta)^{1/2}\Vert \omega^{-1/2}G_{\beta} \Vert
+\hat{\varepsilon}^{1/2}\Vert \omega^{1/2}E_{\eta} \Vert\cdot
\hat{\varepsilon}^{1/2}\Vert \omega^{-1/2}G_{\eta} \Vert\\
&+C\Vert \omega^{1/2}E \Vert\cdot
\Vert \omega^{-1/2}G_{\beta} \Vert
+\Vert \omega^{1/2}E \Vert\cdot \Vert \omega^{-1/2}G \Vert
\end{align*}
From Lemma \ref{lemma derivative of E} and Lemma \ref{lemma E}, we are done.
\end{proof}

\newtheorem{theorem}{Theorem}[section]
\begin{theorem}
Assume that $\sigma_{\beta}=kN^{-1}\ln N$ and $\sigma_{\eta}=k\tilde{\varepsilon}^{1/2}\ln N$, where $k>0$
is sufficiently large and independent of $\varepsilon$ and $N$. Let $\boldsymbol{x}^{*}\in\Omega\setminus\Omega_{xy}$. Then
for each nonnegative integer $\upsilon$, there exists a positive constant $C=C(\upsilon)$ and $K=K(\upsilon)$ such that
\begin{equation*}
\Vert  G \Vert_{W^{1,\infty}(\Omega_{s}\setminus\Omega'_{0})} \le CN^{-\upsilon},
\end{equation*}
\begin{equation*}
 \varepsilon\vert  G \vert_{W^{1,\infty}((\Omega_{x}\cup\Omega_{y})\setminus\Omega'_{0})}+
\Vert G \Vert_{L^{\infty}((\Omega_{x}\cup\Omega_{y})\setminus\Omega'_{0})} \le CN^{-\upsilon}
\end{equation*}
and
\begin{equation*}
\varepsilon\vert  G \vert_{W^{1,\infty}(\Omega_{xy}\setminus\Omega^{\prime}_{0})}+
\Vert G \Vert_{L^{\infty}(\Omega_{xy}\setminus\Omega^{\prime}_{0})}
\le
C\varepsilon^{-1/2}N^{-\upsilon}.
\end{equation*}
\end {theorem}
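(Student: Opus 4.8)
The plan is to reduce all three estimates to a single global bound on the weighted energy norm, combined with the exponential decay of the weight $\omega$ away from $\boldsymbol{x}^*$, and then to trade weighted $L^2$ control for pointwise control through mesh inverse inequalities, watching the powers of $\varepsilon$ region by region. First I would establish the master estimate $\vert\vert\vert G\vert\vert\vert_\omega\le C(N\ln N)^{1/2}$. Starting from \eqref{analysis equation} and the identity $B(\omega^{-1}G,G)=B(E,G)+(\omega^{-1}G)(\boldsymbol{x}^*)$, the first Lemma shows the correction terms in \eqref{analysis equation} are negligible, so $B(\omega^{-1}G,G)\ge\tfrac14\vert\vert\vert G\vert\vert\vert_\omega^2$; the last Lemma before the theorem gives $|B(E,G)|\le\tfrac1{16}\vert\vert\vert G\vert\vert\vert_\omega^2$, and the second Lemma gives $|(\omega^{-1}G)(\boldsymbol{x}^*)|\le\tfrac1{16}\vert\vert\vert G\vert\vert\vert_\omega^2+CN\ln N$. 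Absorbing the energy terms on the left then yields the master bound.

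Next I would convert this into unweighted local bounds. On a mesh rectangle $\tau\subseteq\Omega\setminus\Omega'_0$ one has $\Vert G\Vert_{L^2(\tau)}\le\max_\tau\omega^{1/2}\,\Vert\omega^{-1/2}G\Vert_{L^2(\tau)}$, and similarly for $G_\beta$ and $G_\eta$ after extracting the weights $(\varepsilon+b^2\delta)^{1/2}$ and $\hat\varepsilon^{1/2}$ built into $\vert\vert\vert\cdot\vert\vert\vert_\omega$. Because $g$ decays exponentially and $\sigma_\beta=kN^{-1}\ln N$, $\sigma_\eta=k\tilde\varepsilon^{1/2}\ln N$, the product weight obeys $\max_{\Omega\setminus\Omega'_0}\omega^{1/2}\le CN^{-\upsilon'}$, where $\upsilon'$ can be made as large as we wish by enlarging the scale parameter $K$ defining $\Omega'_0$. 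Combined with the master bound this gives $\Vert G\Vert_{L^2(\tau)}$, $(\varepsilon+b^2\delta)^{1/2}\Vert G_\beta\Vert_{L^2(\tau)}$ and $\hat\varepsilon^{1/2}\Vert G_\eta\Vert_{L^2(\tau)}$ all bounded by $CN^{-\upsilon'}(N\ln N)^{1/2}$.

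The final and most delicate step is passing to $L^\infty$. In $\Omega_s$ the mesh is coarse, $|\tau|^{-1/2}\le CN$, and since there $\delta=N^{-1}$ and $\hat\varepsilon=\tilde\varepsilon\ge N^{-3/2}$, the element inverse inequalities $\Vert v\Vert_{L^\infty(\tau)}\le C|\tau|^{-1/2}\Vert v\Vert_{L^2(\tau)}$ (and the same for $\nabla v$) introduce only powers of $N$, which are swallowed by $N^{-\upsilon'}$; this yields $\Vert G\Vert_{W^{1,\infty}(\Omega_s\setminus\Omega'_0)}\le CN^{-\upsilon}$. In the layer strips $\Omega_x,\Omega_y$ and the corner $\Omega_{xy}$ the fine mesh makes $|\tau|^{-1/2}$ blow up like $\varepsilon^{-1/2}$ and $\varepsilon^{-1}$ respectively, an excess that no power of $N$ can absorb since $\varepsilon$ has no lower bound. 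For the seminorm contributions the explicit prefactor $\varepsilon$ cancels exactly this excess, so the crude inverse inequality already delivers $\varepsilon|G|_{W^{1,\infty}}\le CN^{-\upsilon}$ on the strips and $\le C\varepsilon^{-1/2}N^{-\upsilon}$ in the corner. For $\Vert G\Vert_{L^\infty}$, however, the crude bound is too lossy, and here I would instead integrate along the fine direction from the outflow boundary, exploiting $G|_{\partial\Omega}=0$: for $\boldsymbol{x}=(x,y)\in\Omega_x$, $G(\boldsymbol{x})=-\int_x^1 G_x\,ds$ gives $|G(\boldsymbol{x})|\le\lambda_x^{1/2}\Vert G_x(\cdot,y)\Vert_{L^2}$, and a one-dimensional inverse inequality in the coarse $y$-direction replaces the pointwise value by $H_y^{-1/2}\Vert G_x\Vert_{L^2(\Omega_x)}$; the factor $\lambda_x^{1/2}\sim(\varepsilon\ln N)^{1/2}$ then cancels the $\varepsilon^{-1/2}$ carried by $\Vert G_x\Vert_{L^2}$, leaving $\Vert G\Vert_{L^\infty((\Omega_x\cup\Omega_y)\setminus\Omega'_0)}\le CN^{-\upsilon}$. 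The same one-directional integration in $\Omega_{xy}$ reduces the naive $\varepsilon^{-1}$ to $\varepsilon^{-1/2}$, producing the stated $C\varepsilon^{-1/2}N^{-\upsilon}$.

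The main obstacle is precisely this sharp tracking of the powers of $\varepsilon$ in the layer and corner regions: a two-dimensional inverse inequality overcounts the fine mesh and is therefore inadmissible for $\Vert G\Vert_{L^\infty}$, so one is forced into the boundary-integration argument above, which trades one fine-mesh factor for the width $\lambda_x^{1/2}$ (or $\lambda_y^{1/2}$). Once the $\varepsilon$-powers are correct, the remaining powers of $N$ and $\ln N$ are harmless, and choosing $K=K(\upsilon)$ large enough that $\upsilon'$ exceeds $\upsilon$ by the required margin completes the proof.
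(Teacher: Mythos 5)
Your overall architecture matches the paper's: the master bound $\vert\vert\vert G\vert\vert\vert_{\omega}\le C(N\ln N)^{1/2}$ assembled from the lemmas, the reduction to smallness of the weight outside $\Omega'_0$, inverse estimates on $\Omega_s$, and a fundamental-theorem-of-calculus argument from the outflow boundary in the layer regions (the paper integrates along a polygonal curve $\Gamma$; you integrate along coordinate rays). Your $\varepsilon$-bookkeeping is also correct: crude inverse estimates suffice for the $\varepsilon$-weighted $W^{1,\infty}$ seminorms, and one-dimensional integration plus a 1D inverse inequality gives the $L^\infty$ bounds with the stated powers.

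However, there is a genuine gap: you never verify that the region you integrate over stays clear of $\Omega'_0$, and this is precisely the point the paper's proof is organized around. The theorem allows $\boldsymbol{x}^*\in\Omega_x$ (only $\Omega_{xy}$ is excluded). In that case the set $\Omega'_0$ where $\omega$ is not small --- a strip of crosswind half-width $\sim K\sigma_\eta\ln N$ extending from $\boldsymbol{x}^*$ in the $-\boldsymbol{\beta}$ direction --- cuts through $\Omega_x$ itself, and inside it the weighted energy norm yields only $\Vert G_x\Vert_{L^2}\le C\varepsilon^{-1/2}(N\ln N)^{1/2}$ with no $N^{-\upsilon'}$ gain. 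Consequently your literal inequality, that $\lambda_x^{1/2}H_y^{-1/2}\Vert G_x\Vert_{L^2(\Omega_x)}$ ends up of size $N^{-\upsilon}$, is false whenever $\Omega_x\cap\Omega'_0\ne\emptyset$: the factor $N^{-\upsilon}$ can only come from $\max\omega^{1/2}$ over the integration region, so the strip $[x,1]\times[y_j,y_{j+1}]$ must avoid $\Omega'_0$. This is exactly why the paper insists on $\Gamma\subset(\Omega\setminus\Omega_{xy})\setminus\Omega'_0$ and splits into the cases $(\boldsymbol{x}-\boldsymbol{x}^*)\cdot\boldsymbol{\eta}<0$ and $(\boldsymbol{x}-\boldsymbol{x}^*)\cdot\boldsymbol{\eta}>0$ (``the situation is a little complicated''). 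Your coordinate-ray choice can in fact be repaired: along a rightward ray the $\beta$-offset increases and the $\eta$-offset decreases, so the ray could enter $\Omega'_0$ only through the side $\{(\boldsymbol{x}-\boldsymbol{x}^*)\cdot\boldsymbol{\eta}=K\sigma_\eta\ln N\}$ at a point whose $\beta$-offset is still below $K\sigma_\beta\ln N$; since that entry point lies in $\Omega_x$ and the crosswind half-width $K\sigma_\eta\ln N\sim\tilde\varepsilon^{1/2}\ln^2N$ exceeds the strip width $\lambda_x\sim\varepsilon\ln N$ by a factor tending to infinity, this would force $x^*_1>1$, a contradiction; a margin argument (weight smallness outside $\Omega'_0$ with parameter $K/2$, points taken outside $\Omega'_0$ with parameter $K$) then absorbs the $O(H_y)$ thickness of the strip. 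But this geometric verification is the actual content of the layer-region proof, and your proposal omits it entirely.
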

\begin{proof}
On $\Omega_{s}$, we apply an inverse estimate.\\*
On $\Omega\setminus\Omega_{s}$ the application of an
inverse estimate does not yield a satisfactory result, so we use a different technique.
\par
Let $\boldsymbol{x}\in\Omega_{x}\setminus\Omega^{\prime}_{0}$ be arbitrary. Starting from $\boldsymbol{x}$ we choose a polygonal curve $\Gamma\subset(\Omega\setminus\Omega_{xy})\setminus\Omega^{\prime}_{0}$ that joints $\boldsymbol{x}$ with some point on outflow boundaries. If $(\boldsymbol{x}-\boldsymbol{x}^{\ast})\cdot \eta<0$, we can choose $\Gamma$ as a line parallel to $\beta$. If $(\boldsymbol{x}-\boldsymbol{x}^{\ast})\cdot \eta>0$, the situation is a little complicated. We can choose $\Gamma$ as follows:\\*
In $\Omega_{x}\setminus\Omega^{\prime}_{0}$, we choose the direction of $\Gamma$ along $\eta$ or the negative direction of $x$-axis so that $\Gamma\cap\Omega_{xy}=\Phi$. In $(\Omega_{s}\cup\Omega_{y})\setminus\Omega^{\prime}_{0}$, we choose the direction of $\Gamma$ along $\eta$ or the positive direction of $y-$axis. \\*
 Let $T^{N}$ be the set of mesh rectangle $\tau$ in $(\Omega\setminus\Omega_{xy})\setminus\Omega^{\prime}_{0}$ that $\Gamma$ intersects. Note that the length of the segment of $\Gamma$ that lies in each $\tau$ is at most $C\varepsilon N^{-1}\ln N$ if $\tau\in \Omega_{x}$ or $\tau\in\Omega_{y}$.
\par
Then, by the fundamental theorem of calculus and inverse estimates in different domain, we can obtain the results.
\end{proof}

\bibliographystyle{plain}
\bibliography{alex-FE-paper,alex-FE-book}
\end{document}